\renewcommand\theenumi{\alph{enumi}}
\renewcommand{\labelenumi}{(\theenumi)}
\newtheoremstyle{mytheoremstyle} 
    {5pt}                    
    {5pt}                    
    {\itshape}                   
    {\parindent}                           
    {\bf}                   
    {.}                          
    {.5em}                       
    {}  
\theoremstyle{mytheoremstyle}
\newtheorem{theorem}{Theorem}[section]
\newtheorem{lemm}[theorem]{Lemma}
\newtheorem{prop}[theorem]{Proposition}
\newtheorem{coro}[theorem]{Corollary}
\newtheoremstyle{mytdefintionstyle} 
    {5pt}                    
    {5pt}                    
    {\rm}                   
    {\parindent}                           
    {\bf}                   
    {.}                          
    {.5em}                       
    {}  
\theoremstyle{remark}
\newtheorem{rmrk}[theorem]{Remark}
\theoremstyle{mytdefintionstyle}
\newtheorem{exmp}[theorem]{Example}
\newtheoremstyle{exmp_contd}
    {5pt}                    
    {5pt}                    
    {\rm}                   
    {\parindent}                           
    {\bf}                   
    {.}                          
    {.5em}                       
    {\thmname{#1}\ \thmnumber{ #2}\thmnote{#3}\ (continued)}  
\theoremstyle{exmp_contd}
\newcommand\nameft\textrm
\newcommand{\QQ}{{\mathscr{Q}}}
\renewcommand{\SS}{{\mathscr{S}}}
\newcommand{\FF}{{\mathscr{F}}}
\newcommand{\GG}{{\mathscr{G}}}
\newcommand{\HH}{{\mathscr{H}}}
\DeclareMathOperator{\coker}{coker}
\DeclareMathOperator{\img}{im}
\DeclareMathOperator{\conimg}{conim}
\DeclareMathOperator{\Hom}{Hom}
\DeclareMathOperator{\Proj}{Proj}
\DeclareMathOperator{\Sh}{Sh}
\DeclareMathOperator{\Obj}{Obj}
\DeclareMathOperator{\Sat}{Sat}
\newcommand{\Sgrmod}{{\grS\mathrm{\textnormal{-}grMod}}}
\newcommand{\Sfpgrmod}{{\grS\mathrm{\textnormal{-}grmod}}}
\newcommand{\Sfpgrmodd}{{\grS\mathrm{\textnormal{-}grmod}_{\geq d}}}
\newcommand{\Sqfgrmod}{{\grS\mathrm{\textnormal{-}qfgrmod}}}
\newcommand{\qCoh}{\mathfrak{qCoh}\,}
\newcommand{\Coh}{\mathfrak{Coh}\,}
\newcommand\grS{S}
\newcommand\grM{M}
\newcommand\shF{\mathcal{F}}
\renewcommand\O{\mathcal{O}}
\newcommand\PP{\mathbb{P}}
\newcommand\A{\mathcal{A}}
\newcommand\C{\mathcal{C}}
\newcommand{\D}{\mathcal{D}}
\newcommand\B{\mathcal{B}}
\newcommand\F{\mathcal{F}}
\renewcommand{\O}{\mathcal{O}}
\newcommand{\CC}{\mathbb{C}}
\newcommand{\Z}{\mathbb{Z}}
\renewcommand\phi{\varphi}
\DeclareMathOperator\Id{Id}
\DeclareMathOperator\Cl{Cl}
\definecolor{darkgray}{rgb}{0.3,0.3,0.3}
\newcommand{\biggerA}{{\A'}}
\newcommand{\biggerC}{{\C'}}
\newcommand{\bigA}{{\widetilde{\A}}}
\newcommand{\bigC}{{\widetilde{\C}}}
\newcommand{\ourA}{{\A}}
\newcommand{\ourC}{{\C}}
\definecolor{darkgreen}{rgb}{0.008,0.617,0.067}
\definecolor{brown}{rgb}{0.6,0.4,0.2}
\newif\ifjournalversion
\author{Mohamed Barakat}
\address{Department of mathematics, University of Kaiserslautern, 67653 Kaiserslautern, Germany}
\email{\href{mailto:Mohamed Barakat <barakat@mathematik.uni-kl.de>}{barakat@mathematik.uni-kl.de}}
\author{Markus Lange-Hegermann}
\address{Lehrstuhl B f\"ur Mathematik, RWTH Aachen University, 52062 Aachen, Germany}
\email{\href{mailto:Markus Lange-Hegermann <markus.lange.hegermann@rwth-aachen.de>}{markus.lange.hegermann@rwth-aachen.de}}
\begin{document}

\title[Characterizing \nameft{Serre} quotients with no section functor]{Characterizing \nameft{Serre} quotients with no section functor and applications to coherent sheaves}

\begin{abstract}
  We prove an analogon of the the fundamental homomorphism theorem for certain classes of exact and essentially surjective functors of \nameft{Abel}ian categories $\QQ:\ourA \to \B$.
  It states that $\QQ$ is up to equivalence the \nameft{Serre} quotient $\ourA \to \ourA / \ker \QQ$, even in cases when the latter does not admit a section functor.
  For several classes of schemes $X$, including projective and  toric varieties, this characterization applies to the sheafification functor from a certain category $\A$ of finitely presented graded modules to the category $\B=\Coh X$ of coherent sheaves on $X$.
  This gives a direct proof that $\Coh X$ is a \nameft{Serre} quotient of $\A$.
\end{abstract}

\keywords{\nameft{Serre} quotient, fundamental homomorphism theorem, exact functors, \nameft{Abel}ian categories, \nameft{Gabriel} localization, coherent sheaves}
\subjclass[2010]{
  18E35, 
  18F20, 
  18A40, 
  18E40}  

\maketitle


\renewcommand\theenumi{\alph{enumi}}
\renewcommand{\labelenumi}{(\theenumi)}

 \section{Introduction}

An essentially surjective exact functor $\QQ:\A \to \B$ of \nameft{Abel}ian categories induces an essentially surjective exact embedding $\overline{\QQ}:\A/ \C \to \B$, where $\C$ is the kernel of $\QQ$ and $\A/\C$ the \nameft{Serre} quotient.
It is natural to ask if or when $\overline{\QQ}$ is an equivalence of categories, i.e., if or when the fundamental homomorphism Theorem is valid for exact functors of \nameft{Abel}ian categories.

\nameft{Grothendieck} mentioned this question in \cite{Tohoku_en} but did not provide an answer:
\begin{quote}
Thus $\A/\C$ \emph{appears as an abelian category; moreover the identity functor $\QQ: \A \to \A/\C$ is exact} (and, in particular, commutes with kernels, cokernels, images, and coimages), \emph{$\QQ(A) = 0$ if and only if $A \in \C$, and any object of $\A/\C$ has the form $\QQ(A)$ for some $A \in \A$}.
These are the facts (which essentially characterize the quotient category) which allow us to safely apply the ``$\operatorname{mod} \C$'' language, since this language signifies simply that we are in the quotient abelian category.
\end{quote}
As \nameft{Grothendieck} indicated by the word ``essentially'' all the above conditions yet do not characterize \nameft{Serre} quotients:
In Appendix~\ref{sec:HT} we give an explicit example of an exact faithful functor between \nameft{Abel}ian categories which is not full on its image.

\nameft{Gabriel} proved in his thesis (cf.~Proposition~\ref{prop:local_ker}) the following characterization of \nameft{Serre} quotients under a stronger assumption:
An exact functor $\QQ:\A\to\B$ induces an equivalence between the \nameft{Serre} quotient category $\A/\ker \QQ$ and $\B$ if $\QQ$ admits a right adjoint for which the counit of the adjunction is an isomorphism.
Such a right adjoint is called a section functor.

One goal of this paper is to prove two propositions which, under weaker assumptions, allow us to recognize \nameft{Serre} quotients not necessarily admitting section functors.
In Proposition~\ref{prop:F_equiv} we were able to prove an analogon of the second isomorphism theorem for \nameft{Abel}ian categories relating two \nameft{Serre} quotients modulo thick torsion subcategories.
In Proposition~\ref{prop:equiv} we give, under an additional assumption, the following analogon of the fundamental homomorphism Theorem.
If $\QQ$ is a certain restriction of an ambient functor $\QQ'$ satisfying the assumptions of \nameft{Gabriel}'s characterization then $\QQ$ still induces the desired equivalence of categories $\A/\ker \QQ\simeq \B$.

Our original motivation is to establish a constructive setup for coherent sheaves on several classes of schemes $X$.
This setup begins with describing quotient categories constructively by the $3$-arrow formalism of so-called the \nameft{Gabriel} morphisms \cite{BL_GabrielMorphisms}.
Then, the methods in this paper allow to recognize $\B=\Coh X$ as a \nameft{Serre} quotient of the \nameft{Abel}ian category $\A=\Sfpgrmod$ of \emph{finitely presented} graded modules over some graded coherent ring $\grS$.
This gives an alternative and simpler proof than the one which can be derived from \cite[Theorem~2.6]{KrauseSpectrum} using a result in \cite{Lenzigfpmod}.
The last step in this setup is modeling the category $\A=\Sfpgrmod$ over a computable ring $\grS$ constructively through a presentation matrix \cite{BL}.
This setup turns out to be suitable and computationally efficient for a computer implementation.

In Section~\ref{section:preliminaries_sere_quotients} we collect some preliminaries about \nameft{Serre} quotients.
In Section~\ref{section:recognition} we prove Propositions \ref{prop:F_equiv} and \ref{prop:equiv} which serve to identify certain categories as \nameft{Serre} quotient.
In Section~\ref{section:Coh} we apply the two propositions to categories of coherent sheaves.

We would like to thank \nameft{Florian Eisele}, \nameft{Markus Perling}, \nameft{Sebastian Posur}, and \nameft{Sebastian Thomas} for helpful discussions.

\section{Preliminaries on \nameft{Serre} quotients}\label{section:preliminaries_sere_quotients}

In this section we recall some results about \nameft{Serre} quotients.
From now on $\A$ is an \nameft{Abel}ian category.
See \cite{Gab_thesis} for proofs and \cite{BL_Monads} for detailed references.

A non-empty full subcategory $\mathcal{C}$ of an \nameft{Abel}ian category $\A$ is called \textbf{thick} if it is closed under passing to subobjects, factor objects, and extensions.
In this case the \textbf{(\nameft{Serre}) quotient category} $\A/\C$ is a category with the same objects as $\A$ and $\Hom$-groups
\[
  \Hom_{\A/\C}(M,N) := \varinjlim_{\substack{M' \hookrightarrow M, N' \hookrightarrow N \\ M/M', N' \in \C}} \Hom_\A(M',N/N')\mbox{.}
\]
The \textbf{canonical functor} $\QQ:\A \to \A/\C$ is defined to be the identity on objects and maps a morphism $\phi \in \Hom_\A(M,N)$ to its image in the direct limit $\Hom_{\A/\C}(M,N)$.
The category $\A/\C$ is \nameft{Abel}ian and the canonical functor $\QQ: \A \to \A/\C$ is exact and fulfills the following universal property.
If $\GG:\A \to \D$ an exact functor of \nameft{Abel}ian categories, and $\GG(\C)$ is zero then there exists a unique functor $\HH:\A/\C \to \D$ with $\GG=\HH \circ \QQ$.

Let $\C\subset\A$ be thick.
An object $M \in \A$ is called \textbf{$\C$-torsion-free} if $M$ has no nonzero subobjects in $\C$.
Denote by $\A_\C \subset \A$ the pre-\nameft{Abel}ian full subcategory of $\C$-torsion-free objects.
If every object $M\in\A$ has a maximal subobject $H_\C(M)\in\C$ then $(\C,\A_\C)$ is a \textbf{hereditary torsion theory} of $\A$, i.e., $\C$ and $\A_\C$ are additive and full subcategories, $\C$ is closed under subobjects, $M/H_\C(M)$ is $\C$-torsion-free, i.e., lies in $\A_\C$,
\begin{align*}
  \Hom_\A(C,A) &=0 \mbox{ for all }C\in\C\mbox{ and }A\in\A_\C\mbox{,} \\
  \Hom_\A(C,A) &=0 \mbox{ for all }C\in\C\mbox{ implies }A\in\A_\C\mbox{ for all }A\in\A\mbox{, and} \\
  \Hom_\A(C,A) &=0 \mbox{ for all }A\in\A_\C\mbox{ implies }C\in\C\mbox{ for all }C\in\A\mbox{.}
\end{align*}
In this case we call $\C$ a \textbf{thick torsion} subcategory.

\begin{rmrk} \label{rmrk:Hom_avoiding_limit}
For $\C\subset\A$ thick torsion the description of $\Hom$-groups in $\A/\C$ simplifies to
\[
  \Hom_{\A/\C}(M,N) = \varinjlim_{\substack{M' \hookrightarrow M \\ M / M' \in \C}} \Hom_\A(M',N/H_\C(N)) \mbox{.}
\]
\end{rmrk}

An object $M\in\A$ is called \textbf{$\C$-saturated} if it is $\C$-torsion-free and every extension of $M$ by an object $C \in \C$ is trivial.
Denote by $\Sat_\C(\A)\subset\A$ the full subcategory of $\C$-saturated objects.
We say that $\A$ \textbf{has enough $\C$-saturated objects} if for each $M \in \A$ there exists a $\C$-saturated object $N$ and a morphism $\eta_M: M \to N$ such that $\ker \eta_M \in \C$.
Any thick subcategory $\C\subset\A$ is called a \textbf{localizing} subcategory if the canonical functor $\QQ: \A \to \A/\C$ admits a right adjoint $\SS:\A/\C \to \A$, called the \textbf{section functor} of $\QQ$.
The category $\C \subset \A$ is loca\-lizing if and only if $\A$ has enough $\C$-saturated objects.
The section functor $\SS:\A/\C \to \A$ is left exact and preserves products, the counit of the adjunction $\delta: \QQ \circ \SS \xrightarrow{\sim} \Id_{\A/\C}$ is a natural isomorphism, and an object $M$ in $\A$ is $\C$-saturated if and only if $\eta_M:M \to (\SS \circ \QQ)(M)$ is an isomorphism, where $\eta$ is the unit of the adjunction.
Finally, a localizing $\C\subset\A$ is a thick torsion subcategory with $H_\C(M) = \ker \eta_M$.

\section{Recognition of \nameft{Serre} quotients}\label{section:recognition}

We say that an exact functor  $\QQ' :\biggerA \to \B'$  of \nameft{Abel}ian categories admits a \textbf{section functor} if $\QQ'$ admits a right adjoint $\SS'$ such that the counit of the adjunction $\delta':\QQ' \circ \SS' \to \Id_{\B'}$ is a natural isomorphism\footnote{These adjunctions $\QQ' \dashv (\SS':\B' \to \biggerA)$ are the exact reflective localization of \nameft{Abel}ian categories.}.
It follows that $\QQ'$ is essentially surjective.
The next proposition characterizes \nameft{Serre} quotients $\biggerA/\biggerC$ for $\biggerC$ a \emph{localizing} subcategory of $\biggerA$.

\begin{prop}[{\cite[Proposition~III.2.5]{Gab_thesis}, \cite[Chap.~1.2.5.d]{GabZis}}]\label{prop:local_ker}
  Let $\QQ':\biggerA \to \B'$ be an exact functor of \nameft{Abel}ian categories admitting a section functor $\SS'$.
  Then $\biggerC := \ker \QQ'$ is a localizing subcategory of $\biggerA$ and the adjunction $\QQ' \dashv (\SS':\B' \to \biggerA)$ induces an adjoint equivalence of categories $\biggerA/\biggerC \simeq \B'$.
\end{prop}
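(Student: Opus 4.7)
The plan is to exhibit $\biggerC := \ker \QQ'$ as a localizing subcategory of $\biggerA$ and then to promote $\SS'$ to an inverse of the induced functor $\overline{\QQ'} : \biggerA/\biggerC \to \B'$. Thickness of $\biggerC$ is immediate: since $\QQ'$ is exact, subobjects, quotient objects and extensions of objects annihilated by $\QQ'$ are again annihilated by $\QQ'$.

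To see that $\biggerC$ is localizing, I would invoke the criterion recalled in Section~\ref{section:preliminaries_sere_quotients} and, for each $M \in \biggerA$, present the unit $\eta'_M : M \to \SS'\QQ'M$ of the adjunction. The triangle identity $\delta'_{\QQ'M} \circ \QQ'(\eta'_M) = \id_{\QQ'M}$ together with $\delta'$ being a natural isomorphism forces $\QQ'(\eta'_M)$ to be an isomorphism, so exactness of $\QQ'$ places both $\ker \eta'_M$ and $\coker \eta'_M$ in $\biggerC$. It remains to check that every object in the essential image of $\SS'$ is $\biggerC$-saturated. Torsion-freeness is a one-line adjunction computation: $\Hom_{\biggerA}(C, \SS'X) \cong \Hom_{\B'}(\QQ'C, X) = 0$ for $C \in \biggerC$. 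For the extension condition, given a short exact sequence $0 \to \SS'X \xrightarrow{i} E \to C \to 0$ with $C \in \biggerC$, exactness of $\QQ'$ makes $\QQ'i$ an isomorphism, and I would construct the retraction $r := (\eta'_{\SS'X})^{-1} \circ \SS'(\QQ'i)^{-1} \circ \eta'_E$. Naturality of $\eta'$ at $i$ yields $\eta'_E \circ i = \SS'(\QQ'i) \circ \eta'_{\SS'X}$, hence $r \circ i = \id$; invertibility of $\eta'_{\SS'X}$ comes from the second triangle identity $\SS'(\delta'_X) \circ \eta'_{\SS'X} = \id_{\SS'X}$ combined with $\delta'_X$ being an isomorphism.

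With $\biggerC$ established as localizing, the universal property of the Serre quotient produces a unique exact functor $\overline{\QQ'}: \biggerA/\biggerC \to \B'$ with $\QQ' = \overline{\QQ'} \circ \QQ$. Setting $\overline{\SS'} := \QQ \circ \SS' : \B' \to \biggerA/\biggerC$, the counit candidate $\overline{\QQ'}\,\overline{\SS'} = \QQ' \circ \SS' \xrightarrow{\delta'} \Id_{\B'}$ is a natural isomorphism by hypothesis, while the unit candidate $\QQ\eta'_M : M \to \QQ\SS'\QQ'M = \overline{\SS'}\,\overline{\QQ'}(M)$ is an isomorphism in $\biggerA/\biggerC$ precisely because $\ker \eta'_M$ and $\coker \eta'_M$ lie in $\biggerC$. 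Under $\overline{\QQ'}$ the morphism $\QQ\eta'_M$ is sent to $\QQ'\eta'_M$, so the triangle identities of $\QQ' \dashv \SS'$ descend verbatim to these candidates, giving an adjoint equivalence $\overline{\QQ'} \dashv \overline{\SS'}$ and in particular $\biggerA/\biggerC \simeq \B'$.

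The main obstacle is the extension half of saturation for $\SS'\QQ'M$: this is the only step where one cannot proceed by a pure adjunction/exactness formality and must manufacture a concrete splitting from both triangle identities together with naturality of $\eta'$. Everything else is diagram-chasing built on the universal property of the canonical quotient functor $\QQ$ and the exactness of $\QQ'$.
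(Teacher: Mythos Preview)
The paper does not supply its own proof of this proposition: it is stated with attribution to \cite[Proposition~III.2.5]{Gab_thesis} and \cite[Chap.~1.2.5.d]{GabZis} and then used as a black box. There is therefore nothing in the paper to compare your argument against line by line.

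That said, your proof is correct and is essentially the classical argument found in those references. You verify thickness of $\biggerC$ via exactness of $\QQ'$, establish that $\biggerA$ has enough $\biggerC$-saturated objects by showing $\SS'\QQ'M$ is $\biggerC$-saturated (torsion-freeness from the adjunction, the extension-splitting from naturality of $\eta'$ together with the invertibility of $\eta'_{\SS'X}$ forced by the second triangle identity and the hypothesis on $\delta'$), and then descend the adjunction through the universal property of $\QQ$. Your identification of the saturation step as the one place requiring an explicit construction rather than a formal manipulation is accurate; this is exactly where \nameft{Gabriel}'s proof does the real work. One minor remark: once you know $\QQ\eta'$ and $\delta'$ are natural isomorphisms you already have an equivalence, so checking that the triangle identities descend is only needed to upgrade it to an \emph{adjoint} equivalence as stated---and your verification of that descent is fine.
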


The aim of this section is to formulate a characterization of certain \nameft{Serre} quotients $\A \to \A/\C$ where the thick subcategory $\C$ is \emph{not} necessarily localizing.
The following two propositions are analogous to the second isomorphism Theorem and to the fundamental homomorphism Theorem.
However, they need some additional assumptions.

\begin{wrapfigure}[8]{r}{2cm}
\centering
\vskip -1.1cm
\begin{tikzpicture}
  \node (A) {$\bigA$};
  \node (C) at ($(0,-1.5)+(A)$) {$\bigC$};
  \node (tA) at ($(1,-1)+(A)$) {$\ourA$};
  \node (tC) at ($(C)+(tA)$) {$\ourC$};
  
  \draw (A) -- (C);
  \draw (A) -- (tA);
  \draw (tA) -- (tC);
  \draw (C) -- (tC);
  
\end{tikzpicture}
\end{wrapfigure}
\mbox{}
\vspace{-1em}
\begin{prop}[Second isomorphism theorem] \label{prop:F_equiv}
  Let $\bigA$ be an \nameft{Abel}ian category and $\bigC \subset \bigA$ a thick torsion subcategory.
  Then for each thick subcategory $\ourA \subset \bigA$ the intersection $\ourC := \bigC \cap \ourA$ is a thick torsion subcategory of $\ourA$.
  If furthermore the restricted canonical functor $\ourA\to\bigA/\bigC$ is essentially surjective then it induces and equivalence of categories
  \[
    \ourA/\ourC \xrightarrow{\sim} \bigA/\bigC \mbox{.}
  \]
\end{prop}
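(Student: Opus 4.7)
My plan is to first verify that $\ourC$ is a thick torsion subcategory of $\ourA$, then use the universal property of the Serre quotient recalled in Section~\ref{section:preliminaries_sere_quotients} to produce an induced exact functor $\overline{F}\colon \ourA/\ourC \to \bigA/\bigC$, and finally check that $\overline{F}$ is fully faithful by comparing on both sides the filtered colimit description of Hom-groups in Remark~\ref{rmrk:Hom_avoiding_limit}.

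Thickness of $\ourC$ in $\ourA$ is inherited from the thickness of $\bigC$ in $\bigA$ and of $\ourA$ in $\bigA$: in any short exact sequence in $\ourA$ with two of the three terms in $\ourC$, the third lies in $\bigC$ by thickness of $\bigC$ and in $\ourA$ by thickness of $\ourA$, hence in $\ourC = \bigC \cap \ourA$; the cases of subobjects and quotients are analogous. For the torsion structure, I will set $H_{\ourC}(M) := H_{\bigC}(M)$ for $M \in \ourA$; this is legitimate because $H_{\bigC}(M)$ is a subobject of $M \in \ourA$ and $\ourA$ is closed under subobjects, and it is maximal among $\ourC$-subobjects of $M$ since any such is in particular a $\bigC$-subobject of $M$. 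The quotient $M/H_{\ourC}(M)$ is $\ourC$-torsion-free because it is already $\bigC$-torsion-free, and the three $\Hom$-vanishing conditions transfer verbatim from $\bigA$ to $\ourA$ using that $\ourA$ is a full subcategory and that $\ourA_{\ourC} = \ourA \cap \bigA_{\bigC}$.

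For the equivalence, the composition $\ourA \hookrightarrow \bigA \to \bigA/\bigC$ is exact and annihilates $\ourC \subset \bigC$, so it factors through an exact functor $\overline{F}\colon \ourA/\ourC \to \bigA/\bigC$ by the universal property of the Serre quotient. Essential surjectivity of $\overline{F}$ is exactly the standing hypothesis on the restricted canonical functor. For fullness and faithfulness, fix $M, N \in \ourA$ and apply Remark~\ref{rmrk:Hom_avoiding_limit} on both sides. The two indexing systems coincide: any subobject $M' \hookrightarrow M$ in $\bigA$ satisfies $M' \in \ourA$ by thickness, whence $M/M' \in \ourA$, and so $M/M' \in \bigC$ is equivalent to $M/M' \in \ourC$. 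Combined with $H_{\ourC}(N) = H_{\bigC}(N)$ and $\Hom_{\ourA} = \Hom_{\bigA}$ on objects of $\ourA$, the two filtered colimits are canonically identified, and $\overline{F}$ is bijective on Hom-groups.

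The only point requiring real care is the identification of $H_{\ourC}$ with the restriction of $H_{\bigC}$ to $\ourA$; once this is in place, all remaining steps reduce to routine transfer from $\bigA$ via the full embedding $\ourA \hookrightarrow \bigA$, and I do not expect any substantial obstacle.
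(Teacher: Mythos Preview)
Your proposal is correct and follows essentially the same route as the paper: identify $H_{\ourC}$ with the restriction of $H_{\bigC}$ using thickness of $\ourA$, invoke the universal property to obtain the induced functor, and prove full faithfulness by matching the two filtered colimits from Remark~\ref{rmrk:Hom_avoiding_limit} via the coincidence of the indexing systems. The only cosmetic difference is that you verify the three $\Hom$-vanishing conditions explicitly, whereas the paper (and its preliminaries) treats the existence of the maximal $\ourC$-subobject as the defining datum of a thick torsion subcategory, so those checks are redundant though harmless.
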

\begin{proof}
  From the thickness of $\ourA \subset \bigA$ it follows that for each object $N \in \ourA$ the maximal $\bigC$-subobject $H_\bigC(N)$ lies in $\ourC$.
  Furthermore $N/H_\bigC(N)$ is $\ourC$-free since it is $\bigC$-free.
  Summing up, $H_\ourC(N):=H_\bigC(N)$ is the maximal $\ourC$-subobject of $N \in \ourA$ establishing the first assertion.
  By the universal property of $\QQ$ there exists a functor $\ourA/\ourC \to \bigA/\bigC$ which is essentially surjective by our assumption.
  We will now show that it is fully faithful.
  Let $M,N\in\ourA$.
  We can replace $N$ by its $\ourC$-free factor $N / H_\ourC(N)$ and without loss of generality assume that $N$ is $\ourC$-free.
  Because of the thickness of $\ourA\subset\bigA$ and the definition of $\ourC$, the $\bigA$-subobjects $M'$ of $M$ with $M/M'\in \bigC$ are exactly the $\ourA$-subobjects with $M/M'\in \ourC$ and we obtain
  \begin{align*}
    \Hom_{\ourA/\ourC}(M,N) 
      &= \lim_{\substack{ M' \hookrightarrow M\text{ in }\ourA, \\ M/M' \in \ourC}} \Hom_{\ourA}( M', N )
      &\mbox{by Remark~\ref{rmrk:Hom_avoiding_limit} and $N\in \ourA_\ourC$}\\
      &= \lim_{\substack{ M' \hookrightarrow M\text{ in }\bigA, \\ M/M' \in \bigC}} \Hom_{\ourA}( M', N )\\
      &\cong \lim_{\substack{ M' \hookrightarrow M\text{ in }\bigA, \\ M/M' \in \bigC}} \Hom_{\bigA}( M', N )
      & \ourA\subset\bigA\mbox{ full}\\
      &= \Hom_{\bigA/\bigC}(M,N)
      &\mbox{by Remark~\ref{rmrk:Hom_avoiding_limit} and $N\in \bigA_\bigC$.}
  \end{align*}  
\end{proof}

We say that an exact and essentially surjective functor $\QQ:\ourA \to \B$ of \nameft{Abel}ian categories \textbf{admits a sections functor up to extension} if there exists an exact functor $\QQ': \biggerA \to \B'$ admitting a section functor with $\B \subset \B'$ a replete and full \nameft{Abel}ian subcategory, $\ourA \subset \biggerA$ thick, and $\QQ={\QQ'}_{|\ourA}: \ourA \to \B$.
Now we can formulate the analogon of the fundamental homomorphism Theorem.

\begin{wrapfigure}[10]{r}{3.5cm}
\centering
\vskip -0.5cm
\begin{tikzpicture}
  \node (tA) {$\bigA$};
  \node (A1) at ($(0,1.5)+(tA)$) {$\biggerA$};
  \node (tC) at ($(0,-1.5)+(tA)$) {$\bigC$};
  \node (A) at ($(1,-1)+(A)$) {$\ourA$};
  \node (C) at ($(tC)+(A)$) {$\ourC$};
  \node (B) at ($(tA)+(-1.5,0)$) {$\B$};
  \node (B1) at ($(A1)+(B)$) {$\B'$};
  \node (B0) at ($(tC)+(B)$) {$0$};
  
  \draw (A) -- (C);
  \draw (A) -- (tA) -- (A1);
  \draw (tA) -- (tC);
  \draw (C) -- (tC);
  \draw (B0) -- (B) -- (B1);
  
  \draw[-stealth'] (A1) -- node[above] {$\QQ'$} (B1);
  \draw[-stealth'] (tA) -- node[above] {$\widetilde{\QQ}$} (B);
  \draw[-stealth'] (tC) --  (B0);
  
\end{tikzpicture}
\end{wrapfigure}
\mbox{}
\vspace{-1em}
\begin{prop}[Fundamental homomorphism theorem] \label{prop:equiv}
  Let $\QQ:\ourA \to \B$ be an exact and essentially surjective functor of \nameft{Abel}ian categories which admits a section functor up to extension.
  Then $\QQ$ induces an equivalence of categories $\ourA/\ourC \simeq \B$, where $\ourC :=\ker \QQ$ is a thick torsion subcategory of $\ourA$.
\end{prop}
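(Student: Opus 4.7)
The plan is to reduce the statement to \nameft{Gabriel}'s characterization (Proposition~\ref{prop:local_ker}) applied to the ambient functor $\QQ'$, and then to transport the resulting equivalence down to $\ourA$ using essentially the same \nameft{Hom}-limit computation that underlies Proposition~\ref{prop:F_equiv}. Throughout I write $\bigC_0 := \ker \QQ' \subset \biggerA$.

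Since $\QQ'$ admits a section functor, Proposition~\ref{prop:local_ker} shows that $\bigC_0$ is a localizing, and in particular a thick torsion, subcategory of $\biggerA$ and that the induced functor $\biggerA/\bigC_0 \xrightarrow{\sim} \B'$ is an equivalence. Because $\B \subset \B'$ is replete and $\QQ = {\QQ'}_{|\ourA}$, an object $M \in \ourA$ satisfies $\QQ M = 0$ if and only if $\QQ' M = 0$, so $\ourC := \ker \QQ = \bigC_0 \cap \ourA$. Applying the first part of Proposition~\ref{prop:F_equiv} with $\bigA := \biggerA$ and $\bigC := \bigC_0$ then yields that $\ourC$ is a thick torsion subcategory of $\ourA$: thickness of $\ourA \subset \biggerA$ ensures that the $\bigC_0$-torsion subobject $H_{\bigC_0}(N)$ of any $N \in \ourA$ already lies in $\ourA$, and hence in $\ourC$, while a $\bigC_0$-torsion-free quotient is automatically $\ourC$-torsion-free.

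By the universal property of the \nameft{Serre} quotient, $\QQ$ factors through an exact functor $\overline{\QQ} : \ourA/\ourC \to \B$, which is essentially surjective because $\QQ$ was. To see that $\overline{\QQ}$ is also fully faithful, I would repeat the \nameft{Hom}-limit computation from the proof of Proposition~\ref{prop:F_equiv}. Fix $M, N \in \ourA$; replacing $N$ by $N/H_\ourC(N)$, I may assume $N$ is $\ourC$-torsion-free, hence also $\bigC_0$-torsion-free. By thickness of $\ourA \subset \biggerA$ together with $\ourC = \bigC_0 \cap \ourA$, the subobjects $M' \hookrightarrow M$ in $\ourA$ with $M/M' \in \ourC$ coincide with the subobjects in $\biggerA$ with $M/M' \in \bigC_0$, and fullness of $\ourA \subset \biggerA$ gives $\Hom_\ourA(M',N) = \Hom_\biggerA(M',N)$. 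Combining Remark~\ref{rmrk:Hom_avoiding_limit} with Proposition~\ref{prop:local_ker} and the fullness of $\B \subset \B'$ then yields
\[
  \Hom_{\ourA/\ourC}(M,N) \;\cong\; \Hom_{\biggerA/\bigC_0}(M,N) \;\cong\; \Hom_{\B'}(\QQ' M, \QQ' N) \;=\; \Hom_{\B}(\QQ M, \QQ N).
\]

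The one point that requires care, and is in effect the main obstacle, is that Proposition~\ref{prop:F_equiv} as literally stated presupposes essential surjectivity of $\ourA \to \bigA/\bigC$; in our setting this would amount to $\QQ$ being essentially surjective onto all of $\B'$, which we do not have. Inspection of the proof of Proposition~\ref{prop:F_equiv} shows, however, that this hypothesis is used only to upgrade the induced fully faithful functor to an equivalence with $\bigA/\bigC$, while the \nameft{Hom}-computation—and hence full faithfulness—goes through in any case. Combined with the essential surjectivity of $\overline{\QQ}$ onto $\B$ and the fact that $\B \subset \B'$ is a full replete subcategory, this completes the verification that $\overline{\QQ} : \ourA/\ourC \to \B$ is an equivalence.
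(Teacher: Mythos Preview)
Your argument is correct, and it takes a somewhat more direct route than the paper. The paper introduces an intermediate category $\bigA := (\QQ')^{-1}(\B) \subset \biggerA$, checks that the adjunction $\QQ' \dashv \SS'$ restricts to an adjunction $\widetilde{\QQ} \dashv \widetilde{\SS}$ between $\bigA$ and $\B$ with invertible counit, and then applies Proposition~\ref{prop:local_ker} to obtain $\bigA/\bigC \simeq \B$; the remaining work is to verify the essential-surjectivity hypothesis of Proposition~\ref{prop:F_equiv} for the inclusion $\ourA \hookrightarrow \bigA$, which the paper does by chasing objects through the unit $\widetilde{\eta}$. You instead apply Proposition~\ref{prop:local_ker} directly at the level of $\biggerA$, observe that only the full-faithfulness half of the proof of Proposition~\ref{prop:F_equiv} is needed (and that this half is independent of essential surjectivity), and then supply essential surjectivity onto $\B$ from the hypothesis on $\QQ$ together with fullness of $\B \subset \B'$. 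Your route avoids introducing $\bigA$ and the restricted adjunction altogether; the paper's route has the advantage of invoking both propositions as black boxes rather than reopening the proof of Proposition~\ref{prop:F_equiv}. One small point worth making explicit in your write-up is that the composite $\ourA/\ourC \to \biggerA/\bigC_0 \xrightarrow{\sim} \B'$ agrees, via the inclusion $\B \hookrightarrow \B'$, with $\overline{\QQ}$ by the uniqueness clause of the universal property, so that your chain of $\Hom$-isomorphisms really is the map induced by $\overline{\QQ}$.
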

\begin{proof}
  By assumption there exists an exact functor $\QQ': \biggerA \to \B'$ of \nameft{Abel}ian categories admitting a section functor $\SS'$ with $\B \subset \B'$ a replete and full \nameft{Abel}ian subcategory, $\ourA \subset \biggerA$ thick, and $\QQ={\QQ'}_{|\ourA}: \ourA \to \B$.
  First note that $\ourC = \ker \QQ= \bigC \cap \ourA$ for $\bigC := \ker \QQ'$.
  Define $\bigA \subset \biggerA$ as the preimage\footnote{Note that we didn't need the preimage $\bigA$ in the statement of the proposition.} of $\B$ under $\QQ'$, i.e., the full subcategory of $\biggerA$ with object class $\Obj \bigA = \{ M' \in \biggerA \mid \QQ'(M') \in \B \}$.
  $\bigA$ is a full replete \nameft{Abel}ian subcategory of $\biggerA$.
  The section functor $\SS'$ maps objects in $\B$ to objects in $\bigA$ since the counit $\delta': \QQ' \circ \SS' \to \Id_{\B'}$ is an isomorphism.
  Hence, the adjunction induces a restricted adjunction $\widetilde{\QQ} \dashv (\widetilde{\SS}: \B \to \bigA)$ and the counit $\widetilde{\delta}:\widetilde{\QQ} \circ \widetilde{\SS} \to \Id_{\B}$ is still an isomorphism.
  Proposition~\ref{prop:local_ker} implies that $\bigA/\bigC \simeq \B$ since $\bigC=\ker \widetilde{\QQ}=\ker \QQ' \subset \bigA$.
  
  The assertion $\B \simeq \bigA/\bigC \simeq \ourA / \ourC$ now follows from Proposition~\ref{prop:F_equiv} once we have shown that $\ourA\to\bigA/\bigC$ is essentially surjective.
  As $\Sat_\bigC(\bigA)\to\bigA/\bigC$ is essentially surjective (even an equivalence) we need to show that for every $\overline{M}\in\Sat_\bigC(\bigA)$ there exists an $M\in\ourA$ and $M\to\overline{M}$ with kernel and cokernel in $\bigC$.
  Let $M \in \ourA$ be a preimage of $\widetilde{\QQ}(\overline{M}) \in \B$ under the essentially surjective restriction $\QQ=\QQ'_{|\ourA}=\widetilde{\QQ}_{|\ourA}: \ourA \to \B$.
  Then
  \[
   (\widetilde{\SS} \circ \widetilde{\QQ})(M) \xrightarrow[\widetilde{\QQ}(M) \cong \widetilde{\QQ}(\overline{M})]{\cong} (\widetilde{\SS} \circ \widetilde{\QQ})(\overline{M}) \xrightarrow[\cong]{\widetilde{\eta}_{\overline{M}}^{-1}} \overline{M} \mbox{.}
  \]
  Furthermore,  $M \xrightarrow{\widetilde{\eta}_{M}} (\widetilde{\SS} \circ \widetilde{\QQ})(M)$ has kernel and cokernel in $\bigC$.
\end{proof}

\section{Applications to coherent sheaves} \label{section:Coh}

\subsection{Coherent sheaves on projective schemes} \label{subsec:Proj}

Let $A$ be a commutative unitial ring and $\grS=A[x_0,\ldots,x_n]$ the $\Z$-graded polynomial ring over $A$ with $\deg x_i=1$ for all $i$.
Let $\PP^n_A=\Proj \grS$ the $n$-dimensional projective space $A$.
Denote by $\Coh \PP^n_A$ the category of coherent sheaves over $\PP^n_A$.

The category $\Sqfgrmod$ of \textbf{quasi finitely generated} graded $\grS$-modules is the full subcategory in the category of (not necessarily finitely generated) graded $\grS$-modules $\grM$ where the truncated submodule $\grM_{\ge d}$ is finitely generated for $d\in\Z$ high enough.
Further, denote by $\Sqfgrmod^0$ its thick subcategory of $\grS$-modules $\grM$ with $\grM_{\geq d}=0$ for $d \in \Z$ high enough.

\begin{theorem}[\cite{FAC,EGA2}\footnote{The case of $A$ a field is treated in \cite[Prop.~III.2.3, Prop.~III.2.5, Theo.~III.2.2, Prop.~III.2.7, Prop.~III.2.8, Prop.~III.3.6]{FAC} and the general case in \cite[Prop.~3.2.4, Prop.~3.4.3.ii, Prop.~3.3.5, Theorem.~3.4.4]{EGA2}.
The adjointness is shown there by proving the zig-zag identities.}]
  The sheafification functor $\Sh: \Sqfgrmod \to \Coh \PP^n_A$ is exact with kernel $\Sqfgrmod^0$.
  The functor $\Gamma_\bullet: \Coh \PP^n_A \to \Sqfgrmod$ is right adjoint to $\Sh$ and the counit $\widetilde{\delta}:\Sh\circ\Gamma_\bullet\to\Id_{\Coh \PP^n_A}$ is an isomorphism.
  The adjunction $\Sh \dashv \Gamma_\bullet$ induces by Proposition~\ref{prop:local_ker} an adjoint equivalence of categories
  \begin{center}
    \begin{tikzpicture}
      \coordinate (offset) at (0,0.15);
      \node (A) {$\Coh \PP_A$};
      \node (B) at ($(A)+(4,0)$) {$\displaystyle\frac{\Sqfgrmod}{\Sqfgrmod^0}$.};
      \draw[-stealth'] ($(A.east)+(offset)$) -- node[above] {$\Gamma_\bullet$} ($(B.west)+(offset)$);
      \draw[stealth'-] ($(A.east)-(offset)$) -- node[below] {$\Sh$} node[above=-3pt] {\tiny$\sim$} ($(B.west)-(offset)$);
    \end{tikzpicture}
  \end{center}
\end{theorem}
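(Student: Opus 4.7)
The plan is to assemble the three inputs of Proposition~\ref{prop:local_ker} — exactness of $\Sh$, existence of a right adjoint $\Gamma_\bullet$, and invertibility of the counit $\widetilde{\delta}$ — from which the asserted adjoint equivalence $\Sqfgrmod/\Sqfgrmod^0 \simeq \Coh\PP^n_A$ follows directly, with $\ker\Sh$ being identified as the localizing subcategory $\Sqfgrmod^0$.

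First I would verify exactness of $\Sh$: on $\Proj\grS$ the stalk at a relevant homogeneous prime $\p$ is the homogeneous localization $\grM \mapsto \grM_{(\p)}$, which is exact, hence $\Sh$ is exact on $\Sqfgrmod$. For the kernel, $\Sh(\grM)=0$ is equivalent to $\grM_{(\p)}=0$ for every relevant $\p \in \Proj\grS$; applied to a finitely generated truncation $\grM_{\ge d_0}$ this forces each generator to be annihilated by a power of the irrelevant ideal $(x_0,\ldots,x_n)$, and a graded module so annihilated must have $\grM_{\ge d}=0$ for $d$ sufficiently large. This identifies $\ker\Sh=\Sqfgrmod^0$.

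Next I would construct the right adjoint by $\Gamma_\bullet(\shF) := \bigoplus_{n\in\Z}\Gamma(\PP^n_A,\shF(n))$. Serre's finiteness theorem for coherent sheaves on $\PP^n_A$ (over a coherent base $A$) ensures that $\Gamma_\bullet(\shF)_{\ge d}$ is finitely generated for $d$ large, so $\Gamma_\bullet$ lands in $\Sqfgrmod$. The adjunction
\[
  \Hom_{\Coh\PP^n_A}(\Sh(\grM),\shF) \cong \GradedHom(\grM,\Gamma_\bullet(\shF))
\]
is obtained by first checking it on the generators $\grS(-n)$, where both sides reduce to $\Gamma(\PP^n_A,\shF(n))$, and then extending by a five-lemma argument along a finite presentation of $\grM$.

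The main obstacle will be showing that the counit $\widetilde{\delta}:\Sh\circ\Gamma_\bullet\to\Id_{\Coh\PP^n_A}$ is a natural isomorphism, i.e., that a coherent sheaf is reconstructed from its twisted global sections. I would first verify this for the twisting sheaves $\O(d)$ — where $\Gamma_\bullet\O(d)$ differs from $\grS(d)$ only by a submodule in $\Sqfgrmod^0$, which sheafifies to zero — then extend by additivity to finite sums $\O(d_i)^{r_i}$, and finally to arbitrary coherent $\shF$ via a finite presentation $\O(-d_1)^{r_1}\to\O(-d_0)^{r_0}\to\shF\to0$. After passing to a sufficiently high truncation — where the higher cohomology of the twists vanishes by Serre's vanishing theorem — $\Gamma_\bullet$ preserves this presentation, and re-sheafifying with the exact functor $\Sh$ identifies $\widetilde{\delta}_\shF$ as an isomorphism via the five-lemma. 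With all three ingredients in hand, Proposition~\ref{prop:local_ker} immediately delivers the asserted adjoint equivalence.
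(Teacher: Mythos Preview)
The paper does not supply its own proof of this theorem; it is stated as a result quoted from \cite{FAC,EGA2}, with the relevant propositions listed in the footnote and the remark that ``the adjointness is shown there by proving the zig-zag identities.'' Your sketch follows the classical line of argument in those sources, so at the level of strategy there is nothing substantive to contrast.

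Two small points of comparison are worth noting. First, the sources (as the footnote indicates) establish the adjunction by constructing the unit $\grM\to\Gamma_\bullet(\Sh(\grM))$ and counit $\Sh(\Gamma_\bullet(\shF))\to\shF$ explicitly and verifying the triangle identities, rather than via the $\Hom$-isomorphism you propose; both are standard. Second, your five-lemma extension from $\grS(-n)$ to arbitrary $\grM\in\Sqfgrmod$ implicitly assumes a finite presentation of (a truncation of) $\grM$, which over a general commutative ring $A$ is not automatic from finite generation; the direct unit/counit construction in EGA2 avoids this issue, and indeed the paper only imposes the \nameft{Noether}ian hypothesis on $A$ later, in the corollary about $\Sfpgrmod$.
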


Let $\Sfpgrmod$ denote the category of finitely presented graded $\grS$-modules and $\Sfpgrmod^0$ its thick subcategory of $\grS$-modules $\grM$ with $\grM_{\ge d}=0$ for all $d$ large enough.

\begin{coro}
  Let $A$ be a \nameft{Noether}ian ring\footnote{This guarantees that $\Sfpgrmod$ is an \nameft{Abel}an subcategory of $\Sqfgrmod$.}.
  The exact and essentially surjective sheafification functor $\Sh: \Sfpgrmod \to \Coh \PP^n_A$ induces an equivalence of categories
  \[
     \frac{\Sfpgrmod}{\Sfpgrmod^0} \simeq \Coh \PP^n_A\mbox{,}
  \]
  where $\Sfpgrmod^0$ coincides with the kernel of the sheafification functor.
\end{coro}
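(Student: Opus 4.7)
The plan is to deduce the corollary directly from Proposition~\ref{prop:equiv} (the fundamental homomorphism theorem) applied to the sheafification functor, using the preceding theorem to supply the ambient adjunction. Concretely I would set $\ourA = \Sfpgrmod$, $\B = \Coh \PP^n_A$, $\biggerA = \Sqfgrmod$, $\B' = \Coh \PP^n_A$, $\QQ = \Sh|_{\Sfpgrmod}$, and $\QQ' = \Sh: \Sqfgrmod \to \Coh \PP^n_A$. The preceding theorem states exactly that $\QQ'$ admits the section functor $\Gamma_\bullet$, so the hypotheses of Proposition~\ref{prop:equiv} reduce to checking that (i) $\ourA \subset \biggerA$ is a thick subcategory, (ii) $\QQ$ is exact and essentially surjective, and (iii) the kernel of $\QQ$ inside $\ourA$ is exactly $\Sfpgrmod^0$. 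The inclusion $\B \subset \B'$ is trivially a replete full \nameft{Abel}ian subcategory since $\B = \B'$.

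For step (i), the \nameft{Noether}ian hypothesis on $A$ makes $\grS = A[x_0,\ldots,x_n]$ \nameft{Noether}ian, so every finitely generated graded $\grS$-module is finitely presented, and $\Sfpgrmod$ is stable under subobjects, factor objects, and extensions inside $\Sqfgrmod$; this simultaneously justifies the footnote that $\Sfpgrmod$ is an \nameft{Abel}ian subcategory. For step (ii), exactness of $\Sh|_{\Sfpgrmod}$ is inherited from exactness on the larger category $\Sqfgrmod$. Essential surjectivity is the standard fact that any coherent sheaf on $\PP^n_A$ is of the form $\widetilde{\grM}$ for some finitely generated — hence (by \nameft{Noether}ianity of $\grS$) finitely presented — graded $\grS$-module $\grM$; I would just cite this from \cite{EGA2}.

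For step (iii), I would argue that $\ker \QQ = \ker(\Sh|_{\Sfpgrmod}) = \Sfpgrmod \cap \ker(\Sh: \Sqfgrmod \to \Coh \PP^n_A) = \Sfpgrmod \cap \Sqfgrmod^0$, and a finitely presented graded $\grS$-module $\grM$ lies in $\Sqfgrmod^0$ precisely when $\grM_{\ge d}=0$ for some $d$, i.e., when $\grM \in \Sfpgrmod^0$. The identification of the kernel of $\Sh$ on $\Sqfgrmod$ with $\Sqfgrmod^0$ is supplied by the preceding theorem.

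With (i)–(iii) in hand, Proposition~\ref{prop:equiv} immediately yields the equivalence
\[
  \Sfpgrmod / \Sfpgrmod^0 \xrightarrow{\sim} \Coh \PP^n_A.
\]
I don't expect any genuine obstacle here; the substantive work has been done in Proposition~\ref{prop:equiv} and in the preceding theorem. The only place the \nameft{Noether}ian hypothesis is actually used is to make $\Sfpgrmod$ thick in $\Sqfgrmod$ (equivalently, \nameft{Abel}ian in its own right), which is the point flagged in the footnote.
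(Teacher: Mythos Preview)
Your proof is correct, but it takes a slightly different route from the paper's own argument. The paper applies Proposition~\ref{prop:F_equiv} (the second isomorphism theorem) rather than Proposition~\ref{prop:equiv}: it sets $\bigA := \Sqfgrmod$, $\bigC := \Sqfgrmod^0$, $\ourA := \Sfpgrmod$, and verifies that the restricted canonical functor $\ourA \to \bigA/\bigC$ is essentially surjective by an elementary truncation argument (for each $\grM \in \Sqfgrmod$ there is a $d$ with $\grM_{\ge d} \in \Sfpgrmod$ and $\grM/\grM_{\ge d} \in \Sqfgrmod^0$, so $\grM$ and $\grM_{\ge d}$ have the same image in the quotient). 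This yields $\Sfpgrmod/\Sfpgrmod^0 \simeq \Sqfgrmod/\Sqfgrmod^0$, and the preceding theorem then identifies the right-hand side with $\Coh \PP^n_A$. Your approach via Proposition~\ref{prop:equiv} is equally valid but requires as input the essential surjectivity of $\Sh:\Sfpgrmod \to \Coh \PP^n_A$, which you cite from \cite{EGA2}; the paper's route avoids this external citation, deriving essential surjectivity as a consequence rather than assuming it. Conversely, your approach makes more direct use of the paper's main Proposition~\ref{prop:equiv} and mirrors exactly how the toric corollary is proved later in Section~\ref{section:Coh}.
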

\begin{proof}
  Define $\A := \Sfpgrmod \subset \Sqfgrmod =: \widetilde{\A} \supset \bigC := \Sqfgrmod^0$ and $\ourC := \Sfpgrmod^0 = \ourA \cap \bigC$.
  To apply Proposition~\ref{prop:F_equiv} to the thick subcategory $\ourA \subset \bigA$ we need to show that the restricted canonical functor $\widetilde{\QQ}_{\mid \ourA}:\ourA \to \bigA/\bigC$ is essentially surjective.
  By definition of $\Sqfgrmod$ for each $\grM \in \Sqfgrmod$ there exists a $d=d(\grM)\in \Z$ high enough such that $\grM_{\geq d}$ is finitely generated, i.e., lies in $\Sfpgrmod$.
  Further, $\grM/\grM_{\geq d}$ lies in $\Sqfgrmod^0$ by definition of the latter.
  Hence $\grM_{\geq d}$ and $\grM$ have isomorphic images in $\frac{\Sqfgrmod}{\Sqfgrmod^0}$ and we obtain
  \[
    \frac{\Sqfgrmod}{\Sqfgrmod^0} \simeq \frac{\Sfpgrmod}{\Sfpgrmod^0} \simeq \Coh \PP^n_A \mbox{.} \qedhere
  \]
\end{proof}

\begin{rmrk} \label{rmrk:no_section_functor}
  Although the two \nameft{Serre} quotient categories $\frac{\Sqfgrmod}{\Sqfgrmod^0} \simeq \frac{\Sfpgrmod}{\Sfpgrmod^0}$ yield equi\-valent representations of the category $\Coh \PP^n_A$ we now show that the thick torsion subcategory $\Sfpgrmod^0 \subset \Sfpgrmod$ is not localizing, even though $\Sqfgrmod^0 \subset \Sqfgrmod$ is.
  
  Therefore, let $\grS=k[x,y]$ for a field $k$ and assume that there exists a functor $\SS: \frac{\Sfpgrmod}{\Sfpgrmod^0}\to\Sfpgrmod$ right adjoint to the canonical functor $\QQ:\Sfpgrmod\to\frac{\Sfpgrmod}{\Sfpgrmod^0}$.
  Define $M:=\grS/\langle y\rangle$ and $N^n:=x^{-n}M$ for each $n\in\Z$.
  The sheafification of $M$ is a skyscraper sheaf on $\PP^1_k$.
  As $\QQ(N^n)\cong \QQ(M)$ there exists by the $\Hom$-adjunction a morphism $\varphi^n: N^n\to (\SS \circ \QQ)(M)$ with kernel and cokernel in $\Sfpgrmod^0$.
  As $N^n$ and $M$ are not in $\Sfpgrmod^0$ the morphism $\varphi^n$ is nonzero.
  Thus, $\varphi^n$ must map the cyclic generator of $N^n$ of degree $-n$ to a nonzero element of degree $-n$ in $(\SS \circ \QQ)(M)$.
  In particular, $\dim_k ((\SS \circ \QQ)(M))_{-n}>0$ for all $n \in \Z$ and $(\SS \circ \QQ)(M)$ is not finitely generated.
  This is a contradiction.
\end{rmrk}

In \cite{BL_Sheaves} we will prove that $\Coh \PP^n_A$ admits for each $d \in \Z$ yet another representation as the \nameft{Serre} quotient $\Sfpgrmodd/\Sfpgrmod_{\geq d}^0$, where $\Sfpgrmodd$ is the category of finitely presented graded $\grS$-modules vanishing in degrees $<d$.
In this case $\Sfpgrmod_{\geq d}^0 = \Sfpgrmod \cap \Sqfgrmod_{\geq d}^0 \subset \Sfpgrmodd$ is localizing.

Although these categories of truncated modules are computable they have the following computational disadvantage.
Given a fixed $d\in \Z$ there exists coherent sheaves $\F \in \Coh \PP^n_A$ for which $\Sfpgrmod$ contains a vastly more efficient model for $\F$ than $\Sfpgrmodd$.
For example, the minimal number of generators of the smallest model for $\O_{\PP^n_A}(k)$ in $\Sfpgrmodd$ is $\max\left(1,\binom{n+k+d}{n}\right)$ which is disadvantageous if $k \gg -d$.
These positively twisted line bundles occur as soon as we need to dualize a locally free resolution.

\subsection{Coherent sheaves on toric varieties}

We refer the reader to \cite{CLS11} for notation.
Let $X_\Sigma$ be a toric variety with no torus factors and \nameft{Cox} ring $\grS=\CC[x_\rho \mid \rho \in \Sigma(1)]$ graded by the divisor class group $\Cl X_\Sigma$.
We denote by $\Sgrmod$ the category of graded $\grS$-modules.

By \cite[Theorem~1.1]{Mus02} (cf.\ also \cite[Prop.~6.A.3]{CLS11}) the global section functor
\[
  \Gamma_\bullet: \qCoh X_\Sigma \to \Sgrmod: \shF \mapsto \bigoplus_{\alpha \in \Cl X_\Sigma} \Gamma(X_\Sigma,\shF(\alpha))
\]
is right inverse but not right adjoint to the exact sheafification functor $\Sh: \Sgrmod \to \qCoh X_\Sigma$.
Recently \nameft{Perling} found the right adjoint of $\Sh$, also valid in the singular case.
\begin{theorem}[{\cite[Theorem~3.8 and Remark~3.9]{PerLift}}] \label{thm:adjointness}
  Let $X_\Sigma$ be a toric variety with no torus factor.
  There exists a so-called \textbf{lifting functor} $\widehat{\Gamma}$ right adjoint to the exact sheafification functor $\Sh: \Sgrmod \to \qCoh X_\Sigma$, where the counit of the adjunction $\delta: \Sh \circ \widehat{\Gamma} \to \Id_{\qCoh X_\Sigma}$ is a natural isomorphism.
\end{theorem}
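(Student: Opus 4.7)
The strategy I would pursue is to construct $\widehat{\Gamma}$ via the \nameft{Cox} quotient presentation $X_\Sigma\cong W/G$, where $W:=\Spec\grS\setminus V(B)$ is the complement of the irrelevant locus and $G:=\Hom_\Z(\Cl X_\Sigma,\CC^*)$ acts on $\Spec\grS$ through the $\Cl X_\Sigma$-grading of $\grS$. The hypothesis that $X_\Sigma$ has no torus factor is precisely what makes this a geometric quotient of the correct dimension, and it secures the standard dictionary under which $\Cl X_\Sigma$-graded $\grS$-modules correspond to $G$-equivariant quasi-coherent sheaves on $\Spec\grS$ and $\qCoh X_\Sigma$ corresponds to $G$-equivariant quasi-coherent sheaves on $W$.

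Writing $j:W\hookrightarrow\Spec\grS$ for the open immersion, $\pi:W\to X_\Sigma$ for the quotient map, and $\Phi$ for the equivalence between $\Sgrmod$ and the category of $G$-equivariant quasi-coherent sheaves on $\Spec\grS$, the sheafification functor decomposes canonically as $\Sh=(\pi^*)^{-1}\circ j^*\circ\Phi$, where $\pi^*$ is the descent equivalence. My candidate for the right adjoint is therefore obtained by dualising this recipe:
\[
  \widehat{\Gamma}(\shF)\;:=\;\Phi^{-1}\bigl(j_*\pi^*\shF\bigr).
\]
Here $\pi^*\shF$ is the canonical $G$-equivariant lift of $\shF$ to $W$ and $j_*$ extends it to a $G$-equivariant quasi-coherent sheaf on $\Spec\grS$; the latter corresponds to a $\Cl X_\Sigma$-graded $\grS$-module via $\Phi^{-1}$. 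The adjunction $\Sh\dashv\widehat{\Gamma}$ then assembles from the equivalence $\Phi\dashv\Phi^{-1}$, the open-immersion adjunction $j^*\dashv j_*$, and the equivariant descent equivalence along $\pi$.

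For the counit one unravels
\[
  \Sh\bigl(\widehat{\Gamma}(\shF)\bigr)\;\cong\;(\pi^*)^{-1}\bigl(j^*j_*\pi^*\shF\bigr)\;\cong\;(\pi^*)^{-1}(\pi^*\shF)\;\cong\;\shF,
\]
where the middle isomorphism is the classical $j^*j_*\xrightarrow{\sim}\Id$ for the open immersion $j$. Naturality in $\shF$ is automatic from the construction, so one obtains $\delta:\Sh\circ\widehat{\Gamma}\xrightarrow{\sim}\Id_{\qCoh X_\Sigma}$ as required.

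The main obstacle, and the reason the naive candidate $\Gamma_\bullet$ fails to be a right adjoint, is that $\Gamma_\bullet$ corresponds to the different construction $\pi_*\pi^*\shF$ on $X_\Sigma$ rather than to $j_*\pi^*\shF$ on $\Spec\grS$; in the presence of a torus factor, or for non-simplicial toric varieties, these already disagree on global sections. The delicate technical points one must still verify are (i) that $j_*\pi^*\shF$ remains quasi-coherent in full generality, so that $\widehat{\Gamma}(\shF)$ genuinely lies in $\Sgrmod$, and (ii) that an explicit combinatorial description of the graded components is available in terms of the fan $\Sigma$, which is the content of \nameft{Perling}'s construction and the principal technical effort of the cited theorem.
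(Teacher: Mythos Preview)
The paper does not prove this theorem at all: it is quoted verbatim as \cite[Theorem~3.8 and Remark~3.9]{PerLift} and used as a black box in the subsequent corollary. There is therefore no ``paper's own proof'' to compare against; your proposal is a sketch of a proof of \nameft{Perling}'s result, not of anything the present paper establishes.

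That said, your outline is broadly in the spirit of \nameft{Perling}'s construction via the \nameft{Cox} quotient $W\to X_\Sigma$. A few imprecisions are worth flagging. First, for non-simplicial $\Sigma$ the quotient $\pi:W\to X_\Sigma$ is only a good (GIT) quotient, not a geometric one, so the phrase ``geometric quotient of the correct dimension'' overstates what the no-torus-factor hypothesis buys; the descent equivalence for quasi-coherent sheaves must be argued accordingly. Second, your explanation of why $\Gamma_\bullet$ fails is off: $\Gamma_\bullet(\shF)$ is precisely $\Gamma(W,\pi^*\shF)$ with its $\Cl X_\Sigma$-grading, which \emph{is} the global sections of $j_*\pi^*\shF$, so on objects $\Gamma_\bullet$ and your $\widehat{\Gamma}$ agree; the failure of adjointness is more delicate and concerns the unit/counit behaviour and the module structure, not a discrepancy of the kind you describe. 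Third, the quasi-coherence of $j_*$ is not an issue here since $j$ is an open immersion into an affine scheme (hence quasi-compact and separated), so item (i) in your last paragraph is routine; the actual work in \nameft{Perling}'s paper lies elsewhere.
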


This allows us to characterize toric sheaves as quotients of finitely generated modules.

\begin{coro}
  Let $X_\Sigma$ be a toric variety with no torus factor.
  The exact and essentially surjective sheafification functor $\Sh: \Sfpgrmod \to \Coh X_\Sigma$ induces the equivalence
  \[
    \frac{\Sfpgrmod}{\Sfpgrmod^0} \simeq \Coh X_\Sigma \mbox{,}
  \]
   of categories where $\Sfpgrmod^0$ is defined as the kernel of the sheafification functor.
\end{coro}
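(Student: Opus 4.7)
The plan is to invoke the fundamental homomorphism theorem (Proposition~\ref{prop:equiv}) with the ambient adjunction provided by \nameft{Perling}'s Theorem~\ref{thm:adjointness}. In the notation of Proposition~\ref{prop:equiv} I set $\biggerA := \Sgrmod$, $\B' := \qCoh X_\Sigma$, and $\QQ' := \Sh \colon \Sgrmod \to \qCoh X_\Sigma$, whose section functor is the lifting functor $\widehat{\Gamma}$ (with counit an isomorphism). I then take $\ourA := \Sfpgrmod \subset \biggerA$, $\B := \Coh X_\Sigma \subset \B'$, and $\QQ := \Sh|_{\Sfpgrmod}$ as the restricted sheafification. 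That $\Coh X_\Sigma$ is a full, replete abelian subcategory of $\qCoh X_\Sigma$ is standard.

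Two hypotheses then remain to be verified. First, $\Sfpgrmod \subset \Sgrmod$ must be thick. Since the \nameft{Cox} ring $\grS = \CC[x_\rho \mid \rho \in \Sigma(1)]$ is a polynomial ring over $\CC$, hence \nameft{Noether}ian, finitely presented and finitely generated graded $\grS$-modules coincide, and closure of this class under subobjects, quotients, and extensions is immediate. Second, $\QQ$ must be exact and essentially surjective onto $\Coh X_\Sigma$. Exactness is inherited from $\Sh$; essential surjectivity is the real content.

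For essential surjectivity, given $\shF \in \Coh X_\Sigma$, Theorem~\ref{thm:adjointness} gives $\Sh(\widehat{\Gamma}(\shF)) \cong \shF$ via the counit, so $\widehat{\Gamma}(\shF) \in \Sgrmod$ is a (possibly non-finitely-generated) preimage. Coherence of $\shF$ implies that on each of the finitely many affine toric charts $U_\sigma$ the sections are finitely generated over $\O(U_\sigma)$; lifting such local generators to homogeneous elements of $\widehat{\Gamma}(\shF)$ yields a finite set of elements spanning a finitely generated graded submodule $\grM \hookrightarrow \widehat{\Gamma}(\shF)$ whose sheafification recovers all of $\shF$. By \nameft{Noether}ianity $\grM \in \Sfpgrmod$, supplying the desired preimage.

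With these hypotheses in place, Proposition~\ref{prop:equiv} yields the claimed equivalence $\Sfpgrmod / \ker \QQ \simeq \Coh X_\Sigma$, and the identification $\ker \QQ = \Sfpgrmod^0$ is definitional. The main obstacle is the essential-surjectivity step: everything else is a formal check of the hypotheses of Proposition~\ref{prop:equiv}, but one has to be careful in the chart-by-chart construction to produce a finitely generated submodule of $\widehat{\Gamma}(\shF)$ whose sheafification is the whole of $\shF$ and not merely a coherent subsheaf.
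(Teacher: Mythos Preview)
Your approach is exactly the paper's: apply Proposition~\ref{prop:equiv} with $\QQ' = \Sh\colon \Sgrmod \to \qCoh X_\Sigma$ and the section functor $\widehat{\Gamma}$ from Theorem~\ref{thm:adjointness}, restricted to $\ourA = \Sfpgrmod$ and $\B = \Coh X_\Sigma$. The only substantive difference is that the paper does not argue essential surjectivity of $\Sh|_{\Sfpgrmod}$ directly but simply cites \cite[Cor.~1.2]{Mus02} (equivalently \cite[Prop.~6.A.4]{CLS11}).

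Your sketch of essential surjectivity is morally correct but has a wrinkle: you use $\widehat{\Gamma}(\shF)$ as the ambient module from which to carve out a finitely generated submodule, yet \nameft{Perling}'s lifting functor $\widehat{\Gamma}$ is in general \emph{not} $\Gamma_\bullet$ (the paper explicitly notes $\Gamma_\bullet$ is right inverse but not right adjoint), so it is not immediate that local sections of $\shF$ lift to homogeneous elements of $\widehat{\Gamma}(\shF)$. The standard argument (as in \nameft{Musta\c{t}\u{a}}) works instead with $\Gamma_\bullet(\shF)$, where the lifting step is tautological; since $\Sh\circ\Gamma_\bullet \cong \Id$ as well, you could simply swap $\widehat{\Gamma}$ for $\Gamma_\bullet$ in that paragraph and the argument goes through. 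Alternatively, just cite the literature as the paper does.
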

\begin{proof}
  The essential surjectivity of $\Sh$ is the statement of \cite[Cor.~1.2]{Mus02} (cf.\ also \cite[Prop.~6.A.4]{CLS11}).
  Thus, the assumption of Proposition~\ref{prop:equiv} is fulfilled with $\QQ' \dashv (\SS': \B' \to \biggerA)$ being the adjunction $\Sh \dashv (\widehat{\Gamma}:\qCoh X_\Sigma \to \Sgrmod)$ from Theorem~\ref{thm:adjointness}.
\end{proof}

In fact \nameft{Perling} proved Theorem~\ref{thm:adjointness} in a more general setup described in \cite{PerLift} following \cite{Hau08,CoxRings}.
This setup covers toric varieties over arbitrary fields, \nameft{Mori} dream spaces, and categories of equivariant coherent sheaves on them.
Furthermore, \nameft{Trautmann} and \nameft{Perling} proved in \cite[Proposition~5.6.(2),(4)]{PT10} that the sheafification functor restricted to the subcategory of finitely generated graded modules is essentially surjective onto the category of coherent sheaves.

\begin{appendix}

\section{The nonexistence of a fundamental homomorphism theorem} \label{sec:HT}

In this appendix we show that a naive fundamental homomorphism theorem of exact functors between \nameft{Abel}ian categories cannot exist.
Such a theorem would imply that the corestriction of an exact faithful functor to its image\footnote{The image of a functor $\GG:\A \to \B$, denoted by $\img \GG$, is the smallest subcategory of $\B$ which contains all image morphisms $\GG(\phi)$.} is full.
To justify our counterexample we need the following argument.

\begin{lemm} \label{lemm:conservative}
  An exact faithful functor of \nameft{Abel}ian categories is conservative\footnote{A functor $\FF$ is called conservative if reflects isomorphisms, i.e., if $\FF(\phi)$ iso $\implies$ $\phi$ iso.}.
\end{lemm}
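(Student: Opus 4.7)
The plan is to use exactness to transport the hypothesis that $\FF(\phi)$ is an isomorphism into statements about the kernel and cokernel of $\phi$, and then use faithfulness to conclude that these kernel and cokernel objects vanish.

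More concretely, let $\phi: M \to N$ be a morphism in $\A$ with $\FF(\phi)$ an isomorphism in $\B$. Since $\FF$ is exact, it commutes with kernels and cokernels, so
\[
  \FF(\ker \phi) = \ker \FF(\phi) = 0 \quad\text{and}\quad \FF(\coker \phi) = \coker \FF(\phi) = 0.
\]
The key observation is then that a faithful functor reflects zero objects: if $\FF(X) = 0$, then $\FF(\id_X) = \id_{\FF(X)} = 0 = \FF(0_X)$, and faithfulness forces $\id_X = 0_X$, i.e., $X \cong 0$. Applying this to $\ker \phi$ shows $\phi$ is a monomorphism, and applying it to $\coker \phi$ shows $\phi$ is an epimorphism. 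In an \nameft{Abel}ian category every bimorphism is an isomorphism, so $\phi$ is an isomorphism.

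There is no genuine obstacle here; the only subtle point worth stating explicitly is the lemma that a faithful additive functor reflects the zero object, which is immediate from $\id_X = 0_X \Leftrightarrow X \cong 0$ in any pointed category. Exactness is used only in the very first step, and faithfulness only in the second; neither hypothesis can be dropped, as otherwise one could take either a nonexact localization collapsing a nontrivial kernel, or the zero functor on a nonzero category as trivial counterexamples.
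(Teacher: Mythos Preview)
Your proof is correct and follows essentially the same approach as the paper: both use exactness to push the isomorphism condition onto the kernel and cokernel, then invoke faithfulness to conclude these vanish. The only cosmetic difference is that the paper applies faithfulness to the inclusion $\iota:\ker\phi\hookrightarrow M$ and projection $\pi:N\twoheadrightarrow\coker\phi$ being zero, whereas you apply it to $\id_{\ker\phi}$ and $\id_{\coker\phi}$ being zero; these are equivalent formulations of the same step.
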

\begin{proof}
  Let $\FF: \A \to \B$ be such a functor and $\phi:M \to N$ be a morphism in $\A$.
  Consider the exact $\A$-sequence $0 \to \ker \phi \xrightarrow{\iota} M \xrightarrow{\phi} N \xrightarrow{\pi} \coker \phi \to 0$.
  As $\B$ is \nameft{Abel}ian and $\FF$ exact, $\FF(\phi)$ is an isomorphism iff the morphisms $\FF(\iota)$ and $\FF(\pi)$ are zero in $\B$.
  The faithfulness of $\FF$ implies that then $\iota$ and $\pi$ are zero.
  Finally, since $\A$ is \nameft{Abel}ian it follows that $\phi$ is an isomorphism.
\end{proof}

\begin{exmp}
  Let $\B$ be the category of $k$-vector spaces, $G$ a nontrivial group, and $\A$ the category of $G$-representations with object $(V,\rho:G \to \operatorname{GL}(V))$ and $G$-equivariant morphisms in $\B$.
  The forgetful functor $\FF:\A \to \B, (V,\rho) \mapsto V$ is exact and faithful with $\img \FF = \B$.
  First note that $1_V \in \Hom_\B(V,V)$ is in the image of $1_{(V,\rho)}$ under $\FF$.
  However, $1_V \in \Hom_\B(V,V)$ cannot be in the image under $\FF$ of $\Hom_\A((V,\rho),(V,\rho'))$ for two inequivalent representations $(V,\rho) \not\cong (V,\rho')$; a preimage would be an $\A$-isomorphism since $\FF$ is conservative by Lemma~\ref{lemm:conservative}.
  A contradiction.
\end{exmp}

This shows that the corestriction of an exact faithful functor to its image is not necessarily full, in particular, it is not an equivalence\footnote{
Enlarging the image only deepens the problem.
In the following example, due to \nameft{Eisele} \cite{EiseleWideNotFull},
an inclusion functor of an \nameft{Abel}ian subcategory has the inequivalent target category as its essential image:

  Let $k$ be a field and $(R,\mathfrak{m})$ be a finite dimensional local $k$-algebra with $R/\mathfrak{m} \cong k$.
  Let $\ourA$ be the category of $R$-modules and $\B$ be the category with the same objects as $\A$ but $\Hom_\B(M,N) := \Hom_k(M,N)$, i.e., all $k$-vector space homomorphisms.
  The forgetful (identity on objects) functor $\FF:\ourA\to\B$ is clearly exact and surjective (on objects).
  $\B$ is equivalent to the category of all $k$-vector spaces as every $k$-vector space can be seen as an $R$-module via $k\cong R/\mathfrak{m}$.
  The kernel of $\FF$ is the subcategory $0_\ourA$ of zero objects in $\ourA$.
  But $\ourA/0_\ourA\simeq \ourA$ is not equivalent to $\B$ (if $R\not=k$).
} of categories.

\smallskip
Any exact functor $\GG:\A \to \B$ of \nameft{Abel}ian categories with kernel $\C := \ker \GG$ induces a unique faithful exact functor $\HH:\A/\C \to \B$ such that $\GG = \HH \circ \QQ$.
This is the universal property of the canonical functor $\QQ:\A \to \A/\C$.
Note that $\img \QQ$ is in general strictly contained in $\A/\C$ and that $\img \GG$ is strictly contained in $\img \HH$.
In the rest of the appendix we want to define an image-notion for which equality holds in both cases.

We define the \textbf{conservative image} of a functor $\GG:\A \to \B$, denoted by $\conimg \GG$, to be the smallest subcategory of $\B$ which contains $\img \GG$ and inverses of $\B$-isomorphisms in $\img \GG$.
We call $\GG$ \textbf{conservatively surjective} if $\conimg\GG=\B$.
The name is motivated by $\conimg \GG = \img \GG$ for any conservative functor $\GG$.

\begin{lemm}\label{lemm:conservative_surjective}
  Let $\QQ:\A\to\D$, $\GG:\A\to\B$, and $\HH:\D\to\B$ be functors with $\HH\circ\QQ=\GG$ and $\QQ$ conservatively surjective.
  Then $\conimg \GG=\conimg \HH$.
\end{lemm}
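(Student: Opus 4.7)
The plan is to establish both inclusions by appealing to the minimality built into the definition of the conservative image. For the first inclusion $\conimg \GG \subseteq \conimg \HH$, it suffices to note that $\conimg \HH$ satisfies the defining properties of $\conimg \GG$: the identity $\GG = \HH \circ \QQ$ yields $\img \GG \subseteq \img \HH$, and every $\B$-isomorphism in $\img \GG$ is then a $\B$-isomorphism in $\img \HH$, whose inverse is already a morphism of $\conimg \HH$. Minimality of $\conimg \GG$ gives the inclusion.

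For the reverse inclusion $\conimg \HH \subseteq \conimg \GG$, I must verify that $\conimg \GG$ contains $\img \HH$ as well as the inverses of all $\B$-isomorphisms in $\img \HH$. Given $h = \HH(\psi)$ for some $\psi \in \D$, the hypothesis $\conimg \QQ = \D$ allows me to write $\psi$ in $\D$ as a composition $\psi = t_n \circ \cdots \circ t_1$ in which each factor $t_i$ is either $\QQ(\alpha_i)$ for some $\alpha_i \in \A$, or $\QQ(\alpha_i)^{-1}$ for some $\alpha_i$ with $\QQ(\alpha_i)$ a $\D$-isomorphism. Applying the functor $\HH$, which commutes with composition and preserves isomorphisms, yields
\[
h = \HH(t_n) \circ \cdots \circ \HH(t_1),
\]
where each $\HH(t_i)$ is either $\GG(\alpha_i) \in \img \GG$, or $\GG(\alpha_i)^{-1}$ with $\GG(\alpha_i) = \HH(\QQ(\alpha_i))$ a $\B$-isomorphism in $\img \GG$. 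Every factor lies in $\conimg \GG$, and hence so does the composition $h$.

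For the remaining requirement---the inverse of a $\B$-isomorphism $\HH(\psi) \in \img \HH$---I would argue that $\conimg \GG$ is closed under inversion of the $\B$-isomorphisms it contains, a property inherent in the iterative construction of the conservative image by adjoining inverses to $\img \GG$ and closing under composition. Combined with the previous paragraph placing $\HH(\psi)$ inside $\conimg \GG$, this gives $\HH(\psi)^{-1} \in \conimg \GG$. The main technical obstacle is precisely to justify this closure: one must ensure that reversing the decomposition of $\HH(\psi)$ and inverting each factor lands in the defining generators of $\conimg \GG$. This is where the conservative surjectivity of $\QQ$---as opposed to mere essential surjectivity---plays its decisive role, by supplying decompositions of $\psi$ in $\D$ whose $\HH$-images are assembled from $\B$-isomorphisms of $\img \GG$ and their inverses.
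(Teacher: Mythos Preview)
Your argument follows essentially the same route as the paper's. For the easy inclusion you both observe $\img\GG\subseteq\img\HH$, and for the reverse you both exploit $\conimg\QQ=\D$ to write morphisms in $\D$ as composites of $\QQ(\psi_{ij})^{\sigma_{ij}}$ and then apply $\HH$ to convert these into composites of $\GG(\psi_{ij})^{\pm 1}$. The paper's presentation is slightly more direct: instead of first showing $\img\HH\subseteq\conimg\GG$ and then separately treating inverses of $\B$-isomorphisms in $\img\HH$, it begins with an arbitrary $\alpha\in\conimg\HH$, writes it as a composite of factors $\HH(\phi_i)^{\epsilon_i}$ with $\epsilon_i\in\{\pm1\}$, decomposes each $\phi_i$ as a composite of $\QQ(\psi_{ij})^{\sigma_{ij}}$, and concludes in one line that $\alpha$ is a composite of $\GG(\psi_{ij})^{\epsilon_i\sigma_{ij}}$. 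This packaging sidesteps your explicit two-step split and the appeal to ``$\conimg\GG$ is closed under inversion of the $\B$-isomorphisms it contains.''

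That said, the technical worry you flag in your last paragraph is exactly the point the paper handles tersely: when $\epsilon_i=-1$ one must believe that inverting the composite $\HH(\phi_i)=\prod_j\GG(\psi_{ij})^{\sigma_{ij}}$ again yields a composite of admissible generators $\GG(\psi_{ij})^{\pm1}$. The paper records this as the exponent $\epsilon_i\sigma_{ij}$ without further comment. So your proof is neither more nor less complete than the paper's on this issue; it is simply more explicit about where the delicate step lies.
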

\begin{proof}
  It is clear that $\conimg \HH \supset \conimg \GG$.
  Every morphism $\alpha$ in $\conimg \HH$ is a finite composition of morphisms of the form $\HH(\phi_i)^{\epsilon_i}$, with $\epsilon_i \in \{\pm 1\}$.
  Since $\conimg \QQ=\D$ each $\phi_i$ is in turn a finite composition of morphisms of the form $\QQ(\psi_{ij})^{\sigma_{ij}}$, with $\sigma_{ij} \in \{\pm 1\}$.
  Finally $\alpha$ is then a finite composition of morphisms of the form $\GG(\psi_{ij})^{\epsilon_i \sigma_{ij}}$.
\end{proof}

\begin{prop}
  Let  $\GG: \A \to \B$ be an exact functor of \nameft{Abel}ian categories, $\C := \ker \GG$, $\HH:\A/\C \to \B$ the unique functor such that $\GG=\HH \circ \QQ$.
  Then $\HH$ is an exact, faithful, and conservative functor.
  Furthermore $\img \HH = \conimg \HH = \conimg \GG$.
\end{prop}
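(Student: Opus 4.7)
The plan is to verify the four assertions in sequence: exactness of $\HH$, faithfulness of $\HH$, conservativity of $\HH$, and then the two image equalities $\img \HH = \conimg \HH = \conimg \GG$. Exactness of $\HH$ is a standard consequence of the universal property of the canonical functor $\QQ$: any short exact sequence in $\A/\C$ is isomorphic to the $\QQ$-image of a short exact sequence in $\A$ (this is essentially the ``calculus of fractions'' description of $\A/\C$), so applying $\HH$ to it yields $\GG$ applied to the lift, which is exact in $\B$ since $\GG$ is.

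For faithfulness, I would unpack the direct-limit description of $\Hom_{\A/\C}$. A morphism $\bar\phi: M \to N$ in $\A/\C$ is represented by some $\phi: M' \to N/N'$ with $M/M', N' \in \C$, and $\HH(\bar\phi)$ equals $\GG(\phi)$ modulo canonical identifications. If $\HH(\bar\phi) = 0$, then $\GG(\phi) = 0$, so by exactness of $\GG$ the image $\img\phi \subseteq N/N'$ lies in $\ker \GG = \C$. Letting $N'' \subseteq N$ be the preimage of $\img\phi$ under $N \twoheadrightarrow N/N'$, the short exact sequence $0 \to N' \to N'' \to \img\phi \to 0$ together with thickness of $\C$ forces $N'' \in \C$; hence the composite $M' \xrightarrow{\phi} N/N' \twoheadrightarrow N/N''$ vanishes, exhibiting $\bar\phi$ as zero in the direct limit. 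Conservativity of $\HH$ then follows at once from Lemma~\ref{lemm:conservative}, and since the inverse of any iso $\HH(\bar\phi)$ is $\HH(\bar\phi^{-1}) \in \img \HH$, no new morphisms appear when passing from $\img \HH$ to $\conimg \HH$; thus $\img \HH = \conimg \HH$.

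To conclude $\conimg \HH = \conimg \GG$, I would apply Lemma~\ref{lemm:conservative_surjective} to the factorization $\GG = \HH \circ \QQ$. This reduces the equality to showing that $\QQ$ is conservatively surjective. On objects this is trivial since $\QQ$ is the identity on objects. For morphisms, a representative $\phi: M' \to N/N'$ of $\bar\phi$ with $M/M', N' \in \C$ exhibits
\[
  \bar\phi \;=\; \QQ(\pi)^{-1} \circ \QQ(\phi) \circ \QQ(\iota)^{-1},
\]
where $\iota: M' \hookrightarrow M$ and $\pi: N \twoheadrightarrow N/N'$. Both $\QQ(\iota)$ and $\QQ(\pi)$ are isomorphisms in $\A/\C$ because their kernels and cokernels lie in $\C$, so each of them and its inverse lies in $\conimg \QQ$. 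Thus every morphism of $\A/\C$ is a finite composition of $\QQ$-images and formal inverses of such, giving $\conimg \QQ = \A/\C$.

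The main delicate point I foresee is the faithfulness step: one must keep track simultaneously of the subobject $M'$ of $M$ and the quotient $N/N'$ when absorbing $\img \phi$ into the denominator, and verify that the enlarged subobject $N''$ stays in $\C$ via the extension-closure part of thickness. Everything else is bookkeeping from the universal property and from Lemmas~\ref{lemm:conservative} and~\ref{lemm:conservative_surjective}.
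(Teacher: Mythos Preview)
Your proposal is correct and follows essentially the same route as the paper: conservativity via Lemma~\ref{lemm:conservative}, the equality $\img\HH=\conimg\HH$ from conservativity, and $\conimg\HH=\conimg\GG$ via Lemma~\ref{lemm:conservative_surjective} using the same three-arrow factorization $\bar\phi=\QQ(\pi)^{-1}\QQ(\phi)\QQ(\iota)^{-1}$ to show $\QQ$ is conservatively surjective. The only difference is that where the paper dismisses exactness and faithfulness of $\HH$ as ``by definition'' (i.e.\ standard facts from \nameft{Gabriel}'s thesis), you supply explicit self-contained arguments for both; these are correct and harmless elaborations.
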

\begin{proof}
  $\HH$ is exact and faithful by definition and hence conservative by Lemma~\ref{lemm:conservative}.
  Hence $\img \HH = \conimg \HH$.
  The canonical functor $\QQ$ is conservatively surjective, as every morphism $\overline{\phi}:M \to N$ in $\A/\C$ is of the form $\QQ(M' \stackrel{\iota}{\hookrightarrow} M)^{-1} \QQ(M' \xrightarrow{\phi} N/N') \QQ(N \stackrel{\pi}{\twoheadrightarrow} N/N')^{-1}$, with $M/M',N' \in \C$.
  The equality $\conimg \HH = \conimg \GG$ follows from Lemma~\ref{lemm:conservative_surjective}.
\end{proof}

\end{appendix}

\def\cprime{$'$} \def\cprime{$'$} \def\cprime{$'$} \def\cprime{$'$}
  \def\cprime{$'$}
\providecommand{\bysame}{\leavevmode\hbox to3em{\hrulefill}\thinspace}
\providecommand{\MR}{\relax\ifhmode\unskip\space\fi MR }
\providecommand{\MRhref}[2]{%
  \href{http://www.ams.org/mathscinet-getitem?mr=#1}{#2}
}
\providecommand{\href}[2]{#2}


\begin{thebibliography}{BLH14b}

\bibitem[ADHL]{CoxRings}
Ivan Arzhantsev, Ulrich Derenthal, J{\"u}rgen Hausen, and Antonio Laface,
  \emph{Cox rings}, Cox rings (project)
  (\url{http://www.mathematik.uni-tuebingen.de/~hausen/CoxRings/download.php?name=coxrings.pdf}).

\bibitem[BLH11]{BL}
Mohamed Barakat and Markus Lange-Hegermann, \emph{An axiomatic setup for
  algorithmic homological algebra and an alternative approach to localization},
  J. Algebra Appl. \textbf{10} (2011), no.~2, 269--293,
  (\href{http://arxiv.org/abs/1003.1943}{\texttt{arXiv:1003.1943}}).
  \MR{2795737 (2012f:18022)}

\bibitem[BLH13]{BL_Monads}
Mohamed Barakat and Markus Lange-Hegermann, \emph{On monads of exact reflective
  localizations of {A}belian categories}, Homology Homotopy Appl. \textbf{15}
  (2013), no.~2, 145--151,
  (\href{http://arxiv.org/abs/1202.3337}{\texttt{arXiv:1202.3337}}).
  \MR{3138372}

\bibitem[BLH14a]{BL_Sheaves}
Mohamed Barakat and Markus Lange-Hegermann, \emph{A constructive approach to
  coherent sheaves via {G}abriel monads},
  (\href{http://arxiv.org/abs/1409.6100}{\texttt{arXiv:1409.6100}}), 2014.

\bibitem[BLH14b]{BL_GabrielMorphisms}
Mohamed Barakat and Markus Lange-Hegermann, \emph{{G}abriel morphisms and the
  computability of {S}erre quotients with applications to coherent sheaves},
  (\href{http://arxiv.org/abs/1409.2028}{\texttt{arXiv:1409.2028}}), 2014.

\bibitem[CLS11]{CLS11}
David~A. Cox, John~B. Little, and Henry~K. Schenck, \emph{Toric varieties},
  Graduate Studies in Mathematics, vol. 124, American Mathematical Society,
  Providence, RI, 2011. \MR{2810322 (2012g:14094)}

\bibitem[Eis]{EiseleWideNotFull}
Florian Eisele, \emph{Does there exist a wide but not full abelian subcategory
  of an abelian category?}, MathOverflow, (accessed 2012-08-21):
  (\url{http://mathoverflow.net/questions/103868}).

\bibitem[Gab62]{Gab_thesis}
Pierre Gabriel, \emph{Des cat\'egories ab\'eliennes}, Bull. Soc. Math. France
  \textbf{90} (1962), 323--448. \MR{0232821 (38 \#1144)}

\bibitem[GD61]{EGA2}
Alexandre Grothendieck and Jean Dieudonn\'{e}, \emph{{\'E}l{\'e}ments de
  g{\'e}om{\'e}trie alg{\'e}brique {II}}, Publications {M}ath{\'e}matiques,
  vol.~8, Institute des {H}autes {\'E}tudes {S}cientifiques., 1961.

\bibitem[Gro57]{Tohoku_en}
Alexander Grothendieck, \emph{Sur quelques points d'alg\`ebre homologique},
  T\^ohoku Math. J. (2) \textbf{9} (1957), 119--221, Translated by Marcia L.
  Barr and Michael Barr: Some aspects of homological algebra,
  (\url{ftp://ftp.math.mcgill.ca/barr/pdffiles/gk.pdf}). \MR{MR0102537 (21
  \#1328)}

\bibitem[GZ67]{GabZis}
P.~Gabriel and M.~Zisman, \emph{Calculus of fractions and homotopy theory},
  Ergebnisse der Mathematik und ihrer Grenzgebiete, Band 35, Springer-Verlag
  New York, Inc., New York, 1967. \MR{0210125 (35 \#1019)}

\bibitem[Hau08]{Hau08}
J{\"u}rgen Hausen, \emph{Cox rings and combinatorics. {II}}, Mosc. Math. J.
  \textbf{8} (2008), no.~4, 711--757, 847. \MR{2499353 (2010b:14011)}

\bibitem[Kra97]{KrauseSpectrum}
Henning Krause, \emph{The spectrum of a locally coherent category}, J. Pure
  Appl. Algebra \textbf{114} (1997), no.~3, 259--271. \MR{1426488 (98e:18006)}

\bibitem[Len69]{Lenzigfpmod}
Helmut Lenzing, \emph{Endlich pr\"asentierbare {M}oduln}, Arch. Math. (Basel)
  \textbf{20} (1969), 262--266. \MR{0244322 (39 \#5637)}

\bibitem[Mus02]{Mus02}
Mircea Musta{\c{t}}{\u{a}}, \emph{Vanishing theorems on toric varieties},
  Tohoku Math. J. (2) \textbf{54} (2002), no.~3, 451--470. \MR{1916637
  (2003e:14013)}

\bibitem[Per14]{PerLift}
Markus Perling, \emph{A lifting functor for toric sheaves}, Tohoku Math. J. (2)
  \textbf{66} (2014), no.~1, 77--92,
  (\href{http://arxiv.org/abs/1110.0323}{\texttt{arXiv:1110.0323}}).
  \MR{3189480}

\bibitem[PT10]{PT10}
M.~Perling and G.~Trautmann, \emph{Equivariant primary decomposition and toric
  sheaves}, Manuscripta Math. \textbf{132} (2010), no.~1-2, 103--143,
  (\href{http://arxiv.org/abs/0802.0257}{\texttt{arXiv:0802.0257}}).
  \MR{2609290 (2011c:14051)}

\bibitem[Ser55]{FAC}
Jean-Pierre Serre, \emph{Faisceaux alg\'ebriques coh\'erents}, Ann. of Math.
  (2) \textbf{61} (1955), 197--278. \MR{MR0068874 (16,953c)}

\end{thebibliography}
\end{document}
